\newtheorem{precor}{{\bf Corollary}}
\newenvironment{cor}{\begin{precor}{\hspace{-0.5
               em}{\bf.\ }}}{\end{precor}}
\newtheorem{precon}{{\bf Conjecture}}
\newtheorem{predefin}{{\bf Definition}}
\newtheorem{preexm}{{\bf Example}}
\newtheorem{preappl}{{\bf Application}}
\newtheorem{prelem}{{\bf Lemma}}
\newenvironment{lem}{\begin{prelem}{\hspace{-0.5
               em}{\bf.\ }}}{\end{prelem}}
\newtheorem{preproof}{{\bf Proof.\ }}
\newenvironment{proof}[1]{\begin{preproof}{\rm
               #1}\hfill{$\blacksquare$}}{\end{preproof}}
\newtheorem{presproof}{{\bf Sketch of Proof.\ }}
\newtheorem{prethm}{{\bf Theorem}}
\newenvironment{thm}{\begin{prethm}{\hspace{-0.5
               em}{\bf.\ }}}{\end{prethm}}
\newtheorem{prealphthm}{{\bf Theorem}}
\newenvironment{alphthm}{\begin{prealphthm}{\hspace{-0.5
               em}{\bf.\ }}}{\end{prealphthm}}
\newtheorem{prealphlem}{{\bf  Lemma}}
\newtheorem{prepro}{{\bf Proposition}}
\newtheorem{preprb}{{\bf Problem}}
\def\conct[#1,#2]{\mbox {${#1} \leftrightarrow {#2}$}}
\def\dconct[#1,#2]{\mbox {${#1} \rightarrow {#2}$}}
\def\deg[#1,#2]{\mbox {$d_{_{#1}}(#2)$}}
\def\mindeg[#1]{\mbox {$\delta_{_{#1}}$}}
\def\maxdeg[#1]{\mbox {$\Delta_{_{#1}}$}}
\def\outdeg[#1,#2]{\mbox {$d_{_{#1}}^{^+}(#2)$}}
\def\minoutdeg[#1]{\mbox {$\delta_{_{#1}}^{^+}$}}
\def\maxoutdeg[#1]{\mbox {$\Delta_{_{#1}}^{^+}$}}
\def\indeg[#1,#2]{\mbox {$d_{_{#1}}^{^-}(#2)$}}
\def\minindeg[#1]{\mbox {$\delta_{_{#1}}^{^-}$}}
\def\maxindeg[#1]{\mbox {$\Delta_{_{#1}}^{^-}$}}
\def\dre[#1,#2,#3]{\mbox {${\cal E}_{_{#3}}(#1,#2)$}}
\def\pdre[#1,#2,#3]{\mbox {${\cal P}_{_{#3}}(#1,#2)$}}
\def\var[#1,#2]{\mbox {${\rm Var}_{_{#1}}(#2)$}}
\def\ls[#1]{\mbox {$\xi^{^{#1}}$}}
\def\hom[#1,#2]{\mbox {${\rm Hom}({#1},{#2})$}}
\def\onvhom[#1,#2]{\mbox {${\rm Hom^{v}}(#1,#2)$}}
\def\onehom[#1,#2]{\mbox {${\rm Hom^{e}}(#1,#2)$}}
\def\core[#1]{\mbox {$#1^{^{\bullet}}$}}
\def\cay[#1,#2]{\mbox {${\rm Cay}({#1},{#2})$}}
\def\cays[#1,#2]{\mbox {${\rm Cay_{s}}({#1},{#2})$}}
\def\dirc[#1]{\mbox {$\stackrel{\rightarrow}{C}_{_{#1}}$}}
\def\cycl[#1]{\mbox {${\bf Z}_{_{#1}}$}}
\begin{document}
\footnotetext[1]{$\ast$This research was in part supported by
a grant from research institute for ICT.}
\begin{center}
{\Large \bf A Note on $d$-Biclique Covers}\\
\vspace*{0.5cm}
{\bf Farokhlagha Moazami$^\ast$, Nasrin Soltankhah$^\ast$ and Shahzad Basiri$^\ddag$}\\
{\it $^\ast$Department of Mathematics} \\
{\it Alzahra University}\\
{\it Vanak Square 19834 Tehran, I.R. Iran}\\
{\tt f.moazami@alzahra.ac.ir}\\
{\tt soltan@alzahra.ac.ir}\\
{\tt $^\ddag$ shahzad\_basiri@yahoo.com}\\
\end{center}
\begin{abstract}
A $d$-{\em biclique  cover} of a graph $G$ is a collection of bicliques of $G$ such that each edge of $G$ is in at least $d$ of the bicliques. The number of bicliques in a minimum $d$-biclique cover of $G$ is called the $d$-{\em
biclique  covering  number} of $G$ and is denoted by $bc_d(G)$. In this paper, we present an upper bound for the $d$- biclique covering number of the lexicographic product of graphs.  Also, we introduce some bounds of this parameter for some graph constructions and obtain the exact value of the $d$-biclique covering number of some graphs.
\begin{itemize}
\item[]{{\footnotesize {\bf Key words:}\ Biclique cover, Fractional biclique cover, Lexicographic product of graphs, Mycielski graph.}}

\item[]{ {\footnotesize {\bf Subject classification:} 05B40.}}
\end{itemize}
\end{abstract}
\section{Introduction and preliminary results}
A biclique cover of a graph $G$ is a collection of
bicliques (complete bipartite subgraphs) of $G$ such that every edge of $G$ is in at least one of these bicliques. The number of bicliques in a minimum biclique cover of $G$ is called the { \em biclique  covering  number} of $G$ and is denoted by $bc(G)$. This measure has been investigated by many researchers and there are several results about this parameter in the literature. We refer the reader to \cite{bc3, bc1, bc2, jukna, bcthes}. Alon, in \cite{bc3}, generalized this definition and defined a { \em bipartite covering of order} $k$ to study $k$-neighborly families of $n$ standard boxes in $R^d$. He showed that the maximum possible cardinality of a $k$-neighborly family of
standard boxes in $R^d$ is precisely the maximum number of vertices of a complete graph that admits a bipartite covering of order $k$ and size $d$. Also, in \cite{haji}, another generalization of this parameter was considered. They defined the $d$-biclique cover of a graph $G$, to investigate the properties of $(r,w;d)$ cover-free families, see~\cite{cff1} for the definition of $(r,w;d)$ cover-free families. A $d$-{\em biclique  cover} of a graph $G$ is a collection of bicliques of $G$ (the same biclique may occur more than once) such that each edge of $G$ is in at least $d$ of the bicliques. The number of bicliques in a minimum $d$-biclique covering of $G$ is called the $d$-{\it
biclique  covering  number} of $G$ and is denoted by $bc_d(G)$. For a graph $G$ a $d$-biclique cover of size $bc_d(G)$ is called an optimal $d$-biclique cover.
In \cite{haji2}, also, the $d$-biclique covering number of the Kneser graph was studied. It was shown that there exists a secure frame proof code if and only if there exists a biclique cover for the Kneser graph.
Secure frameproof codes  can be considered as a tool for digital fingerprinting. For a deeper discussion  for secure frame proof code we refer the reader to \cite{blackburn, frame}. This motivated us to study the properties of this parameter for graphs in general, in this paper. In the rest of this section, we introduce notations and definitions that are used through the paper and give preliminary results. In Section $2$, we obtain an upper bound for lexicographic products of graphs that generalize a bound of \cite{bc1} about biclique covers. In Section $3$, we investigate this parameter for some constructions of graphs such as the join of graphs and Mycielski graph. The result about joins, when $d=1$, improves a result of Watts \cite{bcthes} about biclique covering. In this paper, we only consider finite simple graphs. As usual,  for a graph $G$, let $V(G)$ and $E(G)$ denote its vertex and edge sets, respectively. The complement $\overline{G}$ of $G$ is the simple graph whose vertex set is $V(G)$ and whose edges are the pairs of nonadjacent vertices of $G$.
 $B(G)$ stands for the maximum number of edges among the bicliques of $G$. A biclique $H$ is called maximum whenever $|E(H)|=B(G)$. A simple observation shows that $d|E(G)|\leq bc_d(G)B(G)$. So we have the following lemma.
\begin{lem}\label{frac}For every graph $G$, $d\frac{|E(G)|}{B(G)}\leq bc_d(G)$.
\end{lem}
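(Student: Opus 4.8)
The plan is to establish the displayed inequality $d|E(G)| \le bc_d(G)\,B(G)$ that the text already flags as a ``simple observation,'' and then to divide through by $B(G)$. The natural tool is a double-counting argument applied to the incidences between edges of $G$ and the bicliques of an optimal cover.

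First I would fix an optimal $d$-biclique cover $\mathcal{C} = \{H_1, \ldots, H_m\}$ of $G$, where $m = bc_d(G)$, and consider the set $S$ of incident pairs $(e, H_i)$ with $e \in E(G)$ and $e \in E(H_i)$. I would then bound $|S|$ in two ways. Counting by edges: since $\mathcal{C}$ is a $d$-biclique cover, every edge $e$ lies in at least $d$ of the bicliques, so each $e$ contributes at least $d$ pairs to $S$, giving $|S| \ge d|E(G)|$. Counting by bicliques: each $H_i$ is a biclique of $G$, hence $|E(H_i)| \le B(G)$ by the definition of $B(G)$ as the maximum number of edges among the bicliques of $G$; summing over the $m$ bicliques yields $|S| = \sum_{i=1}^{m} |E(H_i)| \le m\,B(G) = bc_d(G)\,B(G)$.

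Combining the two estimates gives $d|E(G)| \le |S| \le bc_d(G)\,B(G)$. Assuming $G$ has at least one edge (otherwise the claim is vacuous, both sides being $0$), we have $B(G) \ge 1 > 0$, so dividing by $B(G)$ produces $d\frac{|E(G)|}{B(G)} \le bc_d(G)$, as required.

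I do not expect a genuine obstacle: the argument is a one-line averaging bound, and the only point deserving a word of care is the degenerate edgeless case, where $B(G)$ is not meaningful but the inequality holds trivially.
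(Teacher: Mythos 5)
Your proof is correct and is exactly the argument the paper has in mind: the paper offers no formal proof, merely noting that ``a simple observation shows that $d|E(G)|\leq bc_d(G)B(G)$,'' and your incidence double-counting is precisely that observation made explicit. The extra care about the edgeless case is fine but not needed beyond what you already say.
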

The $k$-cube, $Q_k$, is a graph whose vertices are the ordered
$k$-tuples of $0^{,}$ s and $1^{,}$ s, two vertices being joined
if and only if they differ in exactly one coordinate. Clearly, the
$k$-cube has $2^k$ vertices and $k2^{k-1}$ edges and is a bipartite graph. Since $Q_k$ is a bipartite graph if we consider the vertices of one part of this graph $d$ times then we have a $d$-biclique cover of size $d 2^{k-1}$. On the other hand, the maximum biclique of $Q_k$, whenever $k\geq 5$, is a star with $k$ edges. So by Lemma \ref{frac} we have $bc_d(Q_k)\geq d 2^{k-1}$. Thus $bc_d(Q_k)= d 2^{k-1}$, for $k\geq 5$. This shows that the lower bound in Lemma \ref{frac} is tight.\\
Another way to approach to the $d$-biclique cover is the fractional version of biclique cover, see \cite{bc1, haji, frac}. A {\it fractional biclique cover} is a function $w$ that assigns to each biclique $G_i$ of a graph $G$ a weight such that $w(G_i)\geq 0$, and for each $e \in E(G)$, $\sum w(G_i)\geq 1$, where this summation is taken over all bicliques  of $G$ that contain $e$. So the {\it fractional biclique cover number} of $G$ is $\min \sum w(G_i)$ where the minimum is taken over all fractional biclique covers, and is denoted by $bc^*(G)$. In the theory of fractional coverings it is a well-known fact that
$$ bc^*(G)=\displaystyle \inf_d \frac{bc_d(G)}{d}=\displaystyle\lim_{d \rightarrow\infty}
 \frac{bc_d(G)}{d}. $$

For every graph $G$, the following inequality was proved in \cite{lovasz, fractional}
 $$bc^*(G)\geq\frac{bc(G)}{1+\ln(B(G))}.$$
 So by the definition of fractional biclique cover and aforementioned inequality
  $$ \frac{d \cdot bc(G)}{1+\ln(B(G))}\leq bc_d(G).$$
Also, the following theorem is well-known.
\begin{alphthm}{\rm\cite{fractional}}
 For every non-empty edge-transitive graph $G$, we have
$$bc^*(G)=\frac{|E(G)|}{B(G)}.$$
Also, there exists a positive integer $d$ such that for every positive integer $t$, $bc^*(G)=\frac{bc_{td}(G)}{td}.$
\end{alphthm}

 So using the fractional biclique cover of a graph $G$, one can see that the lower bound in Lemma \ref{frac} for every edge-transitive graph is tight, for some value of $d$.

Let $K_n$ be the complete graph with $n$ vertices; since $K_n$ is an edge-transitive graph and $K_{\lfloor \frac{n}{2} \rfloor, \lceil \frac{n}{2}\rceil}$ is the maximum biclique of $K_n$, then there exists a positive integer $d$ such that
$$bc_d(K_n)= \left \{\begin{array}{ll}
             \frac{2d(n-1)}{n} &  \,\,\,\ n = 2k \\
             \frac{2dn}{n+1} &  \,\,\,\  n = 2k+1
            \end{array} \right.
$$

Let $C_n$ be a cycle with $n$ vertices. The following lemma easily follows.
\begin{lem} Let $d$ and $n$ be positive integers. Then
$$ bc_d(C_n) = \left \{\begin{array}{ll}
             d &  \,\,\,\,\  n=4, \\
             \frac{n}{2}d &   \,\,\,\ n=2k, \  n \geq 6, \\
             \frac{d}{2}n &  \,\,\,\ n =2k+1, \  d = 2k', \\
              \frac{d-1}{2}n + \lfloor \frac{n}{2}\rfloor +1 & \,\,\  n = 2k+1,\ d = 2k'+1\
            \end{array} \right.$$
\end{lem}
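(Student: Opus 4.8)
The plan is to handle all four cases through a single scheme: first compute the maximum biclique size $B(C_n)$, then read off a lower bound from Lemma \ref{frac} (sharpened by the integrality of $bc_d$), and finally match it with an explicit cover assembled from the bicliques of $C_n$, using the fact that repetitions of a biclique are permitted.

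First I would identify the bicliques of $C_n$. Since every vertex has degree $2$, no $K_{1,3}$ occurs; and since $C_n$ contains no $4$-cycle when $n \neq 4$, no $K_{2,2}$ occurs either. Hence for $n=3$ and for $n \geq 5$ the only bicliques are the single edges $K_{1,1}$ and the $n$ paths $K_{1,2}$ centered at the vertices, so $B(C_n)=2$. The value $n=4$ is exceptional, because $C_4$ is itself $K_{2,2}$, giving $B(C_4)=4$.

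For the lower bounds I would apply Lemma \ref{frac}. When $n=4$ it gives $bc_d(C_4) \geq d\,|E(C_4)|/B(C_4) = d$, and taking the single biclique $C_4=K_{2,2}$ with multiplicity $d$ covers each edge $d$ times, so $bc_d(C_4)=d$. For $n \neq 4$ the lemma gives $bc_d(C_n) \geq dn/2$, and since $bc_d$ is an integer this sharpens to $bc_d(C_n) \geq \lceil dn/2 \rceil$, which equals $dn/2$ when $dn$ is even and $(dn+1)/2$ when $dn$ is odd, i.e.\ exactly when $n$ and $d$ are both odd. A short computation shows that the claimed value $\frac{d-1}{2}n + \lfloor n/2\rfloor + 1$ in that last case is precisely $(dn+1)/2 = \lceil dn/2\rceil$.

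It remains to construct covers meeting these bounds using only the stars $K_{1,2}$ (each covering two consecutive edges) and occasionally a single edge. If $n$ is even with $n \geq 6$, I would partition the $n$ edges into $n/2$ consecutive pairs, each a $K_{1,2}$, and repeat this partition $d$ times, yielding a cover of size $dn/2$. If $n$ is odd and $d$ is even, assigning every vertex-star the multiplicity $d/2$ covers each edge exactly $d$ times with $dn/2$ bicliques. The decisive case is $n$ and $d$ both odd: here I would give each vertex-star multiplicity $(d-1)/2$, covering every edge $d-1$ times with $(d-1)n/2$ bicliques, and then add a minimum ordinary biclique cover of $C_n$ (which for odd $n$ has size $\lceil n/2\rceil = \lfloor n/2\rfloor + 1$, via $\lfloor n/2\rfloor$ stars plus one single edge) to supply the last covering of each edge, for a total of $\frac{d-1}{2}n + \lfloor n/2\rfloor + 1$. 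The main obstacle is precisely this both-odd case, where the fractional bound $dn/2$ is not an integer, so one must invoke integrality to push the lower bound up to $\lceil dn/2\rceil$ and simultaneously pay for one extra biclique in the construction; once $B(C_n)$ is pinned down, the other three cases are routine.
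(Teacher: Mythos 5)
Your proposal is correct and complete: the identification $B(C_n)=2$ for $n\neq 4$ (versus $B(C_4)=4$), the lower bound from Lemma~\ref{frac} sharpened by integrality of $bc_d$, and the explicit star covers (including the verification that $\frac{d-1}{2}n+\lfloor n/2\rfloor+1=\lceil dn/2\rceil$ in the both-odd case) together settle all four cases. The paper states this lemma without proof (``easily follows''), but your argument is exactly the technique suggested by the surrounding text --- it is the same lower-bound-plus-construction scheme the paper uses for $Q_k$ --- so there is nothing to add.
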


\section{Lexicographic product of graphs}
In this section, we introduce upper bounds of $d$-biclique covering numbers for
lexicographic products of graphs. The lexicographic product  $G[H]$ of graphs $G$ and $H$ has $V(G[H]) =
V (G) \times V (H)$ as its vertex set and  $(x_1, y_1)(x_2, y_2) \in E(G[H])$ if
either $x_1 = x_2$ and $y_1y_2 \in E(H)$, or $x_1x_2 \in E(G)$. In \cite{bc1}, the following two theorems were proved. \begin{alphthm}{\rm \cite{bc1}}\label{a} If there exists a covering of $G$ by $k$ bicliques then there also exists a covering of the lexicographic product $G[\overline{K}_m]$ by $k$ bicliques.
\end{alphthm}
\begin{alphthm}{\rm \cite{bc1}}\label{b} Let there exist a covering of a graph $G$ by $l$ bicliques and a covering of $K_n$ by $k$ bicliques. Then the graph $K_n[G]$ can be covered by $k + l$ bicliques.
\end{alphthm}
A proper vertex coloring is  a function $f: V(G) \rightarrow \{1,2,\ldots, k \}$ which  assign labels or colors to each vertex of a graph such that no edge connects two identically colored vertices. The minimum number of $k$ or colors which the vertices of a graph $G$ may be colored is called the chromatic number and is denoted by $\chi(G)$. An {\it empty graph} on $n$ vertices consists of $n$ isolated vertices without any edges.
The next theorem gives a result that generalizes Theorems~\ref{a} and \ref{b}.

\begin{thm}\label{product} Let $G$ and $H$ be two graphs, then $$bc_{d}(G[H])\leq
bc_{d}(G)+bc_{d}(H)\chi(\overline{G}).$$
\end{thm}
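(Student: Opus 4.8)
The plan is to split the edges of $G[H]$ into two classes and cover each class separately. Call an edge $(x_1,y_1)(x_2,y_2)$ \emph{vertical} if $x_1=x_2$, so that it lies inside one copy of $H$ and corresponds to an edge $y_1y_2\in E(H)$, and \emph{horizontal} if $x_1x_2\in E(G)$, in which case every pair across the two copies is an edge and the two copies span a complete bipartite graph. I will produce $bc_d(G)$ bicliques that $d$-cover all horizontal edges and $\chi(\overline G)\,bc_d(H)$ bicliques that $d$-cover all vertical edges.

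For the horizontal edges, fix an optimal $d$-biclique cover $A_1,\dots,A_p$ of $G$ with $p=bc_d(G)$, where $A_i$ has parts $S_i,T_i\subseteq V(G)$. I would ``blow up'' each $A_i$ to the biclique $\hat A_i$ of $G[H]$ with parts $S_i\times V(H)$ and $T_i\times V(H)$. Since $st\in E(G)$ for all $s\in S_i$ and $t\in T_i$, the lexicographic edge rule makes $(s,y)(t,y')$ an edge for all $y,y'\in V(H)$, so $\hat A_i$ is genuinely a biclique; moreover $\hat A_i$ contains the horizontal edge $(x_1,y_1)(x_2,y_2)$ exactly when $A_i$ contains $x_1x_2$. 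Hence $\{\hat A_i\}$ $d$-covers every horizontal edge, using $bc_d(G)$ bicliques.

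The more delicate half is the vertical edges, and here is where $\chi(\overline G)$ enters: recall that $\chi(\overline G)$ equals the minimum number of cliques needed to partition $V(G)$, since a proper coloring of $\overline G$ is exactly a partition of $V(G)$ into cliques of $G$. Fix such a partition $V(G)=C_1\cup\dots\cup C_t$ with $t=\chi(\overline G)$, and fix an optimal $d$-biclique cover $B_1,\dots,B_m$ of $H$ with $m=bc_d(H)$, where $B_j$ has parts $U_j,W_j$. For each clique $C_i$ and each $j$, I would form the biclique $\tilde B_{i,j}$ of $G[H]$ with parts $\bigcup_{x\in C_i}\{x\}\times U_j$ and $\bigcup_{x\in C_i}\{x\}\times W_j$. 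The key point to check is that this really is a biclique: for two vertices $(x,u)$ and $(x',w)$ lying in opposite parts, if $x=x'$ then $uw\in E(H)$ because $u\in U_j$ and $w\in W_j$, whereas if $x\neq x'$ then $xx'\in E(G)$ because $C_i$ is a clique, so in either case the pair is an edge of $G[H]$. Thus within a single clique all the copy-wise $H$-bicliques merge into one biclique of $G[H]$, and this merging is precisely the source of the savings over the naive bound $|V(G)|\,bc_d(H)$.

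Finally I would assemble and count. Because $B_1,\dots,B_m$ $d$-cover every edge of $H$, the family $\{\tilde B_{i,j}\}_{j}$ $d$-covers every vertical edge whose first coordinate lies in $C_i$; ranging over the partition, $\{\tilde B_{i,j}\}_{i,j}$ $d$-covers all vertical edges and uses $t\,m=\chi(\overline G)\,bc_d(H)$ bicliques. Together with the $bc_d(G)$ bicliques $\hat A_i$ this yields a $d$-biclique cover of $G[H]$ of the claimed size. It is worth noting that the $\hat A_i$ never meet a vertical edge, since their two parts always differ in the first coordinate, and the $\tilde B_{i,j}$ are only needed for the vertical edges, so the two families do not interfere in a way that could spoil the count. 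I expect the only real obstacle to be verifying the biclique property of $\tilde B_{i,j}$ in tandem with the bookkeeping that each edge is covered at least $d$ times; the rest is routine, and specializing to $H=\overline K_m$ or to $G=K_n$ recovers Theorems~\ref{a} and~\ref{b} as a useful sanity check.
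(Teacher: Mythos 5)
Your proof is correct and follows essentially the same construction as the paper: blow up each biclique of an optimal $d$-biclique cover of $G$ to cover the edges between distinct copies of $H$, and use a partition of $V(G)$ into $\chi(\overline G)$ cliques (i.e., a proper coloring of $\overline G$) to merge, within each clique, the copies of each biclique of $H$ into a single biclique of $G[H]$ covering the edges inside the copies. Your explicit verification that $\tilde B_{i,j}$ is a biclique is a point the paper leaves to the reader, but the argument is the same.
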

\begin{proof}{ Let  $\{H_1, H_2, \ldots, H_{bc_d(H)} \}$
(resp. $\{G_1, G_2, \ldots, G_{bc_d(G)}\}$ )  be a
$d$-biclique cover of $H$ (resp. $d$-biclique cover of $G$), where each $H_i$ (resp. $G_j$) is a complete bipartite graph. Suppose that  $\{A_i,B_i\}$ (resp. $\{A'_j,B'_j\}$) is a bipartition of the vertex set of $H_i$ (resp. $G_j$). Our objective is to construct a $d$-biclique cover for $G[H]$. Assume that $f: V(G) \rightarrow \{1,2,\ldots, \chi(\overline{G})\}$ is a proper vertex coloring of
$\overline{G}$. Let $f^{-1}(i) = \{i_1,i_2, \ldots, i_{t_i} \}$.  For every $i=1, \ldots, \chi(\overline{G})$ and every $j=1, \ldots, bc_d(H)$, construct the complete bipartite graphs $H_{ij}$ with the vertex set $X_{ij} \cup Y_{ij}$ where $X_{ij}= f^{-1}(i)\times A_j$ and $Y_{ij}= f^{-1}(i)\times B_j$. It is not difficult to see that $H_{ij}$ is a subgraph of $G[H]$. Let $|V(H)|=n$ and $\overline{K_n}$ be the empty graph on the vertices of $H$. In view of the definition of lexicographic product,  one can consider $G_i[\overline{K_n}]$ as a biclique of
$G[H]$, for $i=1, \ldots, bc_d(G)$. We claim that
$$  \{G_1[\overline{K}_n], \ldots, G_{bc_d(G)}[\overline{K}_n] \} \cup  \{ H_{ij}| i= 1, \ldots, \chi(\overline{G})  ,  \ j=1, \ldots, bc_d(H)\} $$
covers every edge of $G[H]$ at least $d$ times. Let $(x_1, y_1)(x_2, y_2)$ be an arbitrary edge of $G[H]$. So either $x_1x_2$ is an edge of $G$ or $x_1=x_2$ and $y_1y_2$ is an edge of $H$. In the first case, there are  $d$ indices  $j_1, \ldots, j_d$ such that $G_{j_1}[\overline{K}_n], \ldots, G_{j_d}[\overline{K}_n]$ contain the edge $(x_1, y_1)(x_2, y_2)$. In the other case $x_1=x_2$, hence,  $f(x_1)=f(x_2)=l$ and $y_1y_2$ is an edge of $H$. So there exist at least $d$ bicliques, say $H_{i_1}, \ldots, H_{i_d}$, which contain $y_1y_2$. In view of the construction of $H_{ij}$, it is not difficult to see that $H_{l,i_1}, \ldots, H_{l,i_d}$ contain $(x_1, y_1)(x_2, y_2)$. Therefore this set is a $d$-biclique cover of size $bc_{d}(G)+bc_{d}(H)\chi(\overline{G})$, as desired.
}
\end{proof}

\section{Join of graphs and Mycielski graph}
The join of two disjoint graphs $G_1$ and $G_2$, which is denoted by $G_1 \vee G_2$, is obtained by taking
a copy of $G_1$, a copy of $G_2$ and joining each vertex in $G_1$ to each vertex in $G_2$.
Watts \cite{bcthes} showed  that $bc(\bigvee_{i=1}^k G_i)\leq \sum_{i=1}^k  bc(G_i)+ k-1$. In the next theorem we present a bound that improves this bound.
\begin{thm}\label{veg}Let $G_1, \ldots, G_k$ be simple graphs. Then $$  \max\{bc_d(G_i)\}_{i=1}^k \leq bc_d(\bigvee_{i=1}^k G_i)\leq \max\{bc_d(G_i)\}_{i=1}^k + bc_d(K_k).$$
\end{thm}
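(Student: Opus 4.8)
The plan is to establish the two inequalities separately. For the lower bound I would fix an index $i^*$ with $bc_d(G_{i^*}) = \max_i bc_d(G_i)$ and argue that every $d$-biclique cover of the join restricts to one of $G_{i^*}$. Concretely, given an optimal $d$-biclique cover $\mathcal{B}$ of $\bigvee_{i=1}^k G_i$, I would intersect each biclique $B=(X,Y)\in\mathcal{B}$ with $V(G_{i^*})$ to obtain the pair $(X\cap V(G_{i^*}),\,Y\cap V(G_{i^*}))$. Since the subgraph of the join induced on $V(G_{i^*})$ is precisely $G_{i^*}$ (the join only adds edges between distinct factors), each such intersection is a possibly empty biclique of $G_{i^*}$, and every edge of $G_{i^*}$, being an edge of the join, is covered at least $d$ times by $\mathcal{B}$ and remains so after restriction. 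This produces a $d$-biclique cover of $G_{i^*}$ of size at most $|\mathcal{B}|=bc_d(\bigvee_i G_i)$, giving $\max_i bc_d(G_i)\le bc_d(\bigvee_i G_i)$.

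For the upper bound I would build a cover from two families that handle, respectively, the edges inside the factors and the edges between factors. Writing $m=\max_i bc_d(G_i)$, I would take for each $G_i$ a $d$-biclique cover of size $bc_d(G_i)\le m$ with fixed bipartitions $(A_i^s,B_i^s)$, padding with empty bicliques so that each factor contributes exactly $m$ of them, indexed by $s=1,\ldots,m$. For each $s$ I would form $\tilde{B}^s$ with parts $\bigcup_{i=1}^k A_i^s$ and $\bigcup_{i=1}^k B_i^s$. To handle the edges between factors, I would take a $d$-biclique cover $\{K^1,\ldots,K^{bc_d(K_k)}\}$ of $K_k$ with parts $(P_l,Q_l)\subseteq\{1,\ldots,k\}$, and for each $l$ form $\hat{K}^l$ with parts $\bigcup_{i\in P_l} V(G_i)$ and $\bigcup_{j\in Q_l} V(G_j)$.

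The key verification, and the step I expect to be the main obstacle, is that each $\tilde{B}^s$ is genuinely a biclique of the join. For $u\in A_i^s$ and $v\in B_j^s$: if $i=j$ then $uv$ lies in $G_i$ because $(A_i^s,B_i^s)$ is a biclique there; if $i\ne j$ then $uv$ is present because every cross edge of the join is present. This is exactly where the join structure does the work, allowing bicliques from distinct factors to be merged into a single biclique at no extra cost, which is what makes the bound additive rather than summing the $bc_d(G_i)$. The graphs $\hat{K}^l$ are bicliques for the same reason, since $P_l$ and $Q_l$ are disjoint.

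Finally I would check the covering multiplicities. Each edge inside some $G_i$ is covered at least $d$ times among the $m$ bicliques $\{(A_i^s,B_i^s)\}_{s=1}^m$, and since each $\tilde{B}^s$ contains $(A_i^s,B_i^s)$ as a sub-biclique, that edge is covered at least $d$ times by $\{\tilde{B}^s\}_{s=1}^m$. Each edge between $G_i$ and $G_j$ corresponds to the edge $ij$ of $K_k$, which is covered at least $d$ times by $\{K^l\}$; whenever $K^l$ covers $ij$ the biclique $\hat{K}^l$ contains that cross edge, so it is covered at least $d$ times as well. The two families together use $m+bc_d(K_k)$ bicliques, yielding $bc_d(\bigvee_{i=1}^k G_i)\le \max_i bc_d(G_i)+bc_d(K_k)$.
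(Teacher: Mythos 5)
Your proof is correct and takes essentially the same approach as the paper: pad the factor covers to the common size $m=\max_i bc_d(G_i)$, merge the $s$-th bicliques of all factors into one biclique of the join (which works precisely because of the cross edges), and cover the edges between factors by blowing up a $d$-biclique cover of $K_k$. The only difference is that you spell out two steps the paper leaves implicit, namely the restriction argument for the lower bound and the explicit construction of the blown-up $K_k$ bicliques $\hat{K}^l$.
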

\begin{proof}{  The lower bound is obvious, since every $G_i$ is an induced subgraph of $ \bigvee_{i=1}^k G_i$. Now we want to prove the upper bound. To do this, let $\{ H_{i1},H_{i2}, \ldots, H_{it_i} \}$ be an optimal $d$-biclique cover of $G_i$, i.e., $t_i=bc_d(G_i)$. Without loss of generality assume that $\max\{bc_d(G_i)\}_{i=1}^k=t_1$. For any $1\leq i\leq k$ and $1\leq j\leq t_i$, let $\{X_{ij},Y_{ij}\}$ be a bipartition of the vertex set of $H_{ij}$. Now, we construct a $d$-biclique cover of $\bigvee_{i=1}^k G_i$. For this purpose, for $j=1, \ldots, t_1$, construct the bipartite graph $H_j$ with the vertex set $X_j \cup Y_j$, where $X_j$ is the union of $X_{ij}$'s and $Y_j$ is the union of $Y_{ij}$'s, for all $i$, i.e.,$$ X_j=\bigcup_{i=1}^k X_{ij} \,\,\, {\rm and} \,\,\,\ Y_j=\bigcup_{i=1}^kY_{ij}.$$
Note that some $X_{ij}$'s and $Y_{ij}$'s may be empty. In view of definition of the join operator, $H_j$ is a biclique. The family that contains all $H_j$, covers the edges of $G_i$'s. It remains to cover edges between $G_i$'s. Easily one can cover these edges with a cover of size $bc_d(K_k)$.
}
\end{proof}
It is a well-known fact that the minimum number of bipartite graphs needed
to cover the edges of a graph $G$ is $ \lceil \log \chi(G)\rceil$. In \cite{katona}, Katona and Szemer\'{e}di showed that  $bc(K_n)= \lceil\log(n)\rceil$. One can see that the
inequality  in  Theorem \ref{veg} is the best possible as shown by setting $G_i=K_n$ and $d=1$. This result is also best possible in terms of the parameters $k$ and $d$ as shown in the next corollary.
\begin{cor} Let $H$ be a complete $k$-partite graph. Then $$bc_d(H)=bc_d(K_k).$$
\end{cor}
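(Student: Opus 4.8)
The plan is to realize the complete $k$-partite graph $H$ as a join of $k$ empty graphs and then apply Theorem~\ref{veg} directly. If the parts of $H$ have sizes $n_1, \ldots, n_k$, then $H = \bigvee_{i=1}^k \overline{K}_{n_i}$, where $\overline{K}_{n_i}$ denotes the empty graph on $n_i$ vertices: there are no edges inside any part, and every pair of vertices from distinct parts is adjacent. This is exactly the situation covered by the join bound.

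For the upper bound, I would observe that each empty graph $\overline{K}_{n_i}$ has no edges, so it admits the empty collection as a $d$-biclique cover; hence $bc_d(\overline{K}_{n_i}) = 0$ and therefore $\max\{bc_d(\overline{K}_{n_i})\}_{i=1}^k = 0$. Substituting $G_i = \overline{K}_{n_i}$ into the upper bound of Theorem~\ref{veg} then yields $bc_d(H) \leq 0 + bc_d(K_k) = bc_d(K_k)$.

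For the lower bound, I would exhibit $K_k$ as an induced subgraph of $H$ by choosing exactly one vertex from each of the $k$ (nonempty) parts; any two of these chosen vertices lie in distinct parts and are therefore adjacent, so together they span a $K_k$. Restricting any $d$-biclique cover of $H$ to this vertex set keeps each biclique a biclique (since $H$ induced on the chosen set retains all the relevant edges) and preserves the coverage multiplicity on every edge of the induced $K_k$; this is precisely the monotonicity under taking induced subgraphs already invoked for the lower bound in the proof of Theorem~\ref{veg}. Consequently $bc_d(K_k) \leq bc_d(H)$, and combining the two inequalities gives the claimed equality.

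I do not anticipate a genuine obstacle, since the statement is essentially a specialization of Theorem~\ref{veg}. The only points requiring a moment's care are confirming that $bc_d$ of an edgeless graph is $0$ — so that the additive term $\max\{bc_d(G_i)\}$ vanishes — and that the induced-subgraph monotonicity applies verbatim to pin down the lower bound.
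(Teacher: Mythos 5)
Your proposal is correct and follows essentially the same route as the paper: the paper also writes $H$ as the join of its (edgeless) partite sets, invokes Theorem~\ref{veg} for the upper bound (where the $\max$ term vanishes since empty graphs have $d$-biclique covering number $0$), and uses the fact that $K_k$ is an induced subgraph of $H$ for the lower bound. You merely spell out the two implicit steps (the vanishing $\max$ and the induced-subgraph monotonicity) in more detail than the paper does.
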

\begin{proof}{ If we set $G_i=$ the $i^{th}$ partite set of $H$ ($1\leq i \leq k$), then obviously  $H=\bigvee_{i=1}^kG_i$. Since $H$ has a complete graph on $k$ vertices as an induced subgraph, it holds that $bc_d(K_k)\leq bc_d(H)$. On the other hand Theorem~\ref{veg} yields $bc_d(H)\leq bc_d(K_k)$, which proves the corollary.
}
 \end{proof}

\begin{cor}Let $d$ and $n$ be positive integers. Then
$$bc_d(K_n)\leq bc_d(K_{n^2})\leq 2bc_d(K_n).$$
\end{cor}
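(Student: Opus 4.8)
The plan is to establish the two inequalities separately, relying on the facts that $K_n$ sits inside $K_{n^2}$ as an induced subgraph and that $K_{n^2}$ is exactly the lexicographic product $K_n[K_n]$.

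For the lower bound $bc_d(K_n)\le bc_d(K_{n^2})$, I would first observe that since $n\le n^2$ for every positive integer $n$, the graph $K_n$ is an induced subgraph of $K_{n^2}$. The $d$-biclique covering number is monotone under taking induced subgraphs: given an optimal $d$-biclique cover of $K_{n^2}$, intersecting each biclique with the vertex set of a fixed induced copy of $K_n$ yields a collection of bicliques of $K_n$, and because the copy is induced every edge of $K_n$ is still covered at least $d$ times. This is exactly the reasoning already used for the obvious lower bound in Theorem~\ref{veg}, so it gives $bc_d(K_n)\le bc_d(K_{n^2})$ immediately.

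The main step is the upper bound, and the key observation is the identity $K_{n^2}=K_n[K_n]$. Indeed, the vertex set of $K_n[K_n]$ is $V(K_n)\times V(K_n)$, which has $n^2$ elements, and two distinct vertices $(x_1,y_1)$ and $(x_2,y_2)$ are always adjacent: if $x_1\ne x_2$ then $x_1x_2\in E(K_n)$, and if $x_1=x_2$ then $y_1\ne y_2$, so $y_1y_2\in E(K_n)$. Hence every pair of distinct vertices is joined by an edge and $K_n[K_n]$ is indeed $K_{n^2}$. Applying Theorem~\ref{product} with $G=H=K_n$ then gives
$$bc_d(K_{n^2})=bc_d(K_n[K_n])\le bc_d(K_n)+bc_d(K_n)\,\chi(\overline{K_n}).$$
Finally I would note that $\overline{K_n}$ is the empty graph on $n$ vertices, so $\chi(\overline{K_n})=1$; substituting this yields $bc_d(K_{n^2})\le 2\,bc_d(K_n)$, completing the argument.

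The proof is short once the product identity is spotted, so the only real obstacle is recognizing that $K_{n^2}=K_n[K_n]$ and that the chromatic number of the complement of a complete graph collapses to $1$. After these two observations, both inequalities follow directly from results already established in the paper.
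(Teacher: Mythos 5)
Your proof is correct, but it takes a different route from the one the paper intends. The corollary is placed immediately after Theorem~\ref{veg}, and the intended derivation is to write $K_{n^2}$ as the join of $n$ copies of $K_n$, that is, $K_{n^2}=\bigvee_{i=1}^{n}K_n$; applying Theorem~\ref{veg} with $k=n$ and $G_i=K_n$ then yields both inequalities in a single stroke,
$$bc_d(K_n)=\max_i\{bc_d(K_n)\}\leq bc_d(K_{n^2})\leq bc_d(K_n)+bc_d(K_n)=2bc_d(K_n).$$
You instead identify $K_{n^2}$ with the lexicographic product $K_n[K_n]$ and invoke Theorem~\ref{product} together with $\chi(\overline{K_n})=1$, which gives the same upper bound, and you supply the lower bound separately via induced-subgraph monotonicity (the same observation the paper uses for the lower bound in Theorem~\ref{veg}). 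The two derivations are close relatives: indeed $K_n[H]=\bigvee_{i=1}^{n}H$, and in this special case the biclique constructions underlying Theorems~\ref{product} and \ref{veg} essentially coincide, since the color classes of $\overline{K_n}$ collapse to a single class. What distinguishes them is which result is invoked: the join theorem packages both bounds in one application, whereas your product argument needs the extra (easy) monotonicity step for the lower bound; on the other hand, your route shows the corollary already follows from the Section~2 machinery alone, without any appeal to the join construction of Section~3.
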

A star is a tree on $n$ vertices with one vertex having degree $n-1$ and the others having degree $1$. The vertex that has degree $n-1$is called the center of the star. Let $\beta(G)$ denote the minimum number of vertices in a vertex covering. Easily one can see that if $G$ is a $C_4$-free graph then $bc(G)=\beta(G)$. A {\it matching} in a graph is a set of edges without common vertices. The maximum number of edges in a matching of $G$ is called the {\it matching number} and is denoted by $\alpha'(G)$. By  Konig's Theorem it is well-known that for every bipartite graph $ \beta(G)=\alpha'(G)$.
\begin{alphthm}\label{wats}{\rm \cite{bcthes}} Let  $G$  be  a $C_4$-free graph whose $\beta(G)=\alpha'(G)$. Then $$bc^*(G)=bc(G)= \beta(G).$$
\end{alphthm}
Using Theorem~\ref{wats},  we can determine the exact value of the $d$-biclique covering number for a class of graphs containing, among others, all $C_4$-free bipartite graphs.
\begin{thm}\label{equal} Let $G$ be a $C_4$-free graph whose $\beta(G)$ and $\alpha'(G)$ are equal. Then $bc_d(G)= d\beta(G)$ for all positive integers $d$.
\end{thm}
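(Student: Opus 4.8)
The plan is to pin down $bc_d(G)$ by trapping it between a simple upper bound obtained by repetition and the fractional lower bound recalled in the introduction, and to observe that Theorem~\ref{wats} forces both of these to equal $d\beta(G)$. Since $G$ is $C_4$-free with $\beta(G)=\alpha'(G)$, Theorem~\ref{wats} supplies the crucial equalities $bc(G)=bc^*(G)=\beta(G)$, and everything else is bookkeeping.

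For the upper bound I would begin with an optimal ordinary biclique cover of $G$, which by Theorem~\ref{wats} has exactly $bc(G)=\beta(G)$ members. Because the definition of a $d$-biclique cover explicitly permits a biclique to occur more than once, I can list each biclique of this cover $d$ times; the resulting family then covers every edge of $G$ at least $d$ times and has size $d\,bc(G)$. Hence $bc_d(G)\le d\,bc(G)=d\beta(G)$.

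For the lower bound I would invoke the fractional identity stated in Section~1, namely $bc^*(G)=\inf_d bc_d(G)/d$. In particular $bc^*(G)\le bc_d(G)/d$ for every positive integer $d$, so $bc_d(G)\ge d\,bc^*(G)$. Applying the other half of Theorem~\ref{wats}, $bc^*(G)=\beta(G)$, this becomes $bc_d(G)\ge d\beta(G)$. Combining the two displayed inequalities yields $bc_d(G)=d\beta(G)$ for every $d$, which is the assertion.

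I do not expect a genuine obstacle here: the entire difficulty has been absorbed into Theorem~\ref{wats}, whose content is precisely that the integral and fractional biclique covering numbers coincide (and equal $\beta(G)$) for this class of graphs. The only point worth stressing is that the coincidence $bc(G)=bc^*(G)$ is exactly what makes the trivial repetition bound and the fractional lower bound meet, and that they meet simultaneously for \emph{all} $d$ rather than merely for some special value of $d$; this is what upgrades an asymptotic statement about $bc^*(G)$ into the exact formula $bc_d(G)=d\beta(G)$.
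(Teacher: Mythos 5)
Your proposal is correct and follows exactly the paper's own argument: the paper likewise sandwiches $bc_d(G)/d$ between $bc^*(G)$ (via the identity $bc^*(G)=\inf_d bc_d(G)/d$) and $bc(G)$ (via the trivial repetition bound $bc_d(G)\leq d\cdot bc(G)$), and then collapses the sandwich using Theorem~\ref{wats}, which gives $bc^*(G)=bc(G)=\beta(G)$. There is nothing to add; your write-up is simply a more explicit version of the same proof.
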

\begin{proof}{ In view of the definition of fractional biclique cover we have  $bc^*(G)=\inf_d \frac{bc_d(G)}{d}$. On the other hand,  $bc_d(G) \leq d \ . \ bc(G)$. So, $$bc^*(G) \leq \frac{bc_d(G)}{d} \leq bc(G).$$ Also, by Theorem~\ref{wats} we have $bc^*(G)=bc(G)= \beta(G)$. Therefore, the desired equality holds.
}
\end{proof}
One can see that the aforementioned theorem does not hold always for any $C_4$-free graph. For instance, consider the graph $C_5$.\\
The {\it Mycielski
graph} of $G$, denoted by $M(G)$, has  $V (G)\cup  [V (G)]'\cup \{u\}$ as the vertex set, and the
edge set $$E(G)\cup \{ xy' \ : \ xy \in E(G) \} \cup \{ y'u \ : y \in V (G) \}.$$
In $M(G)$, the new vertex $u$ is called the {\it root}, and for each $y \in V (G)$, there is a
new vertex, $y'$, called the {\it twin} of $y$. In the next theorem, for a graph $G$ we bound the $d$-biclique covering number of the Mycielski graph and using this theorem we can compute the exact value of the $d$-biclique covering number of some graphs.
Let $\{ G_1, G_2, \ldots, G_t \}$ be a $d$-biclique cover of $G$ where $\{X_i,Y_i\}$ is a bipartition of the vertex set of $G_i$. An optimal $d$-biclique cover of $G$ is called {\it good} whenever there exist $X_1, X_2, \ldots, X_s$ such that every vertex of graph $G$ appears at least $d$ times in the multiset $X_1 \biguplus X_2 \biguplus \cdots \biguplus X_s $. Note that if both $X$ and $Y$ contain $v$, then the multiset $X \biguplus Y$ contains $v$ two times.
\begin{thm}\label{mis}Let $G$ be a graph, then $$bc_{2d}(G)\leq bc_d(M(G)) \leq 2bc_d(G)+d.$$
Also, if $G$ has a good $d$-biclique cover, then
$$bc_{2d}(G)\leq bc_d(M(G)) \leq 2bc_d(G).$$
\end{thm}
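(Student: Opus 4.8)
The plan is to prove the chain of inequalities by exhibiting explicit $d$-biclique covers (for the upper bounds) and by restricting a cover of $M(G)$ to recover one for $G$ (for the lower bound). Throughout, let $\{G_1,\ldots,G_t\}$ be an optimal $d$-biclique cover of $G$ with bipartitions $\{X_i,Y_i\}$, so $t=bc_d(G)$.

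For the lower bound $bc_{2d}(G)\le bc_d(M(G))$, I would start from a $d$-biclique cover $\mathcal{B}$ of $M(G)$ and produce a $2d$-biclique cover of $G$. The key structural observation is that each edge $xy\in E(G)$ corresponds to \emph{two} edges in $M(G)$: the original edge $xy$ and the twin-edge $xy'$ (and also $x'y$). Each of these is covered at least $d$ times by $\mathcal{B}$. I would take each biclique $B\in\mathcal{B}$ and project it onto $V(G)$ by identifying every twin $y'$ with its original $y$ and deleting the root $u$; this sends a biclique of $M(G)$ to a biclique (or union that one can restrict to a biclique) of $G$. The point is that the two distinct copies $xy$ and $xy'$ of the edge $xy$, each covered $d$ times, collapse to give coverage $2d$ of $xy$ in $G$. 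The main obstacle here is ensuring the projection of a complete bipartite subgraph of $M(G)$ really is a complete bipartite subgraph of $G$ and that multiplicities add correctly rather than collide; I expect one must argue that because $x$ and $x'$ cannot lie in the same part of any biclique containing both an original and a twin edge, the two edge-copies are genuinely covered by \emph{different} occurrences, so the counts sum to $2d$.

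For the upper bound $bc_d(M(G))\le 2bc_d(G)+d$, I would build the cover by edge-type. The three edge classes of $M(G)$ are $E(G)$, the twin-edges $\{xy':xy\in E(G)\}$, and the spokes $\{y'u:y\in V(G)\}$. Using the fixed cover $\{G_i\}$ of $G$, I would cover $E(G)$ by the $t$ bicliques $G_i$ themselves (viewed in $M(G)$), and cover the twin-edges by $t$ bicliques $G_i'$ obtained by replacing the part $Y_i$ with its twin-copy $Y_i'$ (so $G_i'$ has parts $X_i$ and $Y_i'$); one checks each twin-edge $xy'$ lies in $G_i'$ exactly when $xy\in G_i$, giving coverage $d$. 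This accounts for $2t=2bc_d(G)$ bicliques. For the $d$ spokes, the star centered at $u$ with leaves all $y'$ is itself a biclique of $M(G)$ covering every spoke once; taking it $d$ times covers all spokes $d$ times, contributing $d$ bicliques. The total is $2bc_d(G)+d$, as claimed.

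For the sharper bound $bc_d(M(G))\le 2bc_d(G)$ under the \emph{good} hypothesis, the idea is to absorb the $d$ spoke-bicliques into the twin-bicliques at no extra cost. By the definition of a good cover, there exist parts $X_1,\ldots,X_s$ (among the $X_i$) such that every vertex of $G$ appears at least $d$ times in the multiset $X_1\biguplus\cdots\biguplus X_s$. I would modify the twin-biclique $G_i'$ (for $1\le i\le s$) by adjoining the root $u$ to the side containing $X_i$: that is, replace the part containing $X_i$ by $X_i\cup\{u\}$, so that $G_i'$ now also contains every spoke $u y'$ for $y'\in Y_i'$. The goodness condition is exactly what guarantees that, summed over $i=1,\ldots,s$, each twin $y'$ is joined to $u$ at least $d$ times, covering every spoke $d$ times without introducing any new biclique. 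The main obstacle is verifying that augmenting $G_i'$ with $u$ still yields a \emph{biclique} of $M(G)$ — this requires that $u$ is adjacent in $M(G)$ to every vertex on the opposite part, which holds precisely because the opposite part consists of twins $y'$ (all adjacent to $u$) and $u$ is added to the $X_i$-side, so no forbidden $u$-$X_i$ adjacency is asserted. Once this is checked, the $2s\le 2bc_d(G)$ bicliques suffice, completing the proof.
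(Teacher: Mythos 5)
Your lower-bound argument has a genuine gap: you pair each edge $xy\in E(G)$ with the twin edge $xy'$ and claim these two copies are covered by \emph{different} biclique occurrences, but this is false. The biclique of $M(G)$ with parts $\{x\}$ and $\{y,y'\}$ contains both $xy$ and $xy'$ (and likewise parts $\{x,x'\},\{y\}$ give a biclique containing both $xy$ and $x'y$), because a vertex and its twin are non-adjacent and so may sit in the \emph{same} part; your parenthetical justification is backwards on exactly this point. Hence a $d$-biclique cover of $M(G)$ may cover $xy$ and $xy'$ with the very same $d$ bicliques, and the projected multiplicities need not add to $2d$. The paper instead pairs the two \emph{crossed} twin edges $x'y$ and $xy'$: no single biclique of $M(G)$ can contain both, since putting $x',y$ in opposite parts and $x,y'$ in opposite parts forces either $x,x'$ or $x',y'$ into opposite parts, i.e.\ forces an adjacency between a vertex and its twin or between two twins, neither of which exists in $M(G)$. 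That disjointness of the two families of covering occurrences is the missing idea, and with it the projection argument (which you set up correctly) goes through.

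Your upper bounds also fail, for a common reason: orientation. Each edge $xy\in E(G)$ yields \emph{two} twin edges in $M(G)$, namely $xy'$ and $x'y$, and your biclique $G_i'$ on parts $(X_i,Y_i')$ contains $xy'$ only when $x\in X_i$ and $y\in Y_i$; the oppositely oriented twin edge $x'y$ is covered only if some biclique of the cover of $G$ happens to contain $xy$ with the reverse orientation, which need not occur. Concretely, take $G=K_2$ covered by $d$ copies of $(\{x\},\{y\})$: then $M(K_2)=C_5$ and your family $\{G_i\}\cup\{G_i'\}\cup\{d\text{ stars}\}$ never covers the edge $x'y$ at all. The paper avoids this by taking, for each $i$, \emph{both} a biclique on parts $(X_i'\cup X_i,\,Y_i)$ and one on parts $(X_i,\,Y_i'\cup Y_i)$, so both twin edges of every covered edge are caught regardless of orientation, still at a total cost of $2t+d$. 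The same issue breaks your good-cover refinement: attaching $u$ to the $X_i$-side of $G_i'$ covers the spoke $uy'$ once for each $i\le s$ with $y\in Y_i$, whereas goodness guarantees $d$ appearances of each vertex in the multiset $X_1\biguplus\cdots\biguplus X_s$, not in the $Y_i$'s, so the spokes of vertices missing from the $Y_i$'s go uncovered. The paper's choice, a biclique on parts $(X_i',\,Y_i\cup\{u\})$, covers the spokes $x'u$ for $x\in X_i$ and is the one that actually matches the goodness hypothesis (note that $u$ can only be placed opposite a set of twins, which is why the twins $X_i'$ must form one side).
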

\begin{proof}{ In order to prove the lower bound, we present a $2d$-biclique cover of $G$, using an optimal $d$-biclique cover of $M(G)$. Set $\{ G_1, \ldots, G_t\}$  as a $d$-biclique cover of $M(G)$ such that $t=bc_d(M(G))$. Suppose that $G_i$ is a complete bipartite graph where $\{X_i,Y_i\}$ is a bipartition of its vertex set. Assign to the pair $\{X_i,Y_i\}$, the pair $\{V_i, W_i\}$ as follows:
$$\begin{array}{c}
  V_i=\{v_j \ | \ v_j \in X_i \ {\rm or} \ v'_j \in X_i \}\setminus \{u\}, \\
W_i=\{v_j \ | \ v_j \in Y_i \ {\rm or} \ v'_j \in Y_i \}\setminus \{u\}.
\end{array}
$$
   Since $G_i$ is a biclique we have $V_i \cap W_i =\varnothing $. Now for every $1\leq i \leq t$ construct the complete bipartite graph $H_i$ as a subgraph of $G$ where $\{V_i, W_i\}$ is a bipartition of its vertex set. Let ${\cal C}= \{ H_1, \ldots, H_t\}$. If there exists some $i$ such that $X_i= \{u\}$ or $Y_i= \{u \}$ then the corresponding $H_i$ is an empty graph and we can remove it from ${\cal C}$. We claim that ${\cal C}$ is a $2d$-biclique cover of $G$. To see this, assume that $v_kv_l$ is an arbitrary edge of $G$. Consider two edges $v'_kv_l$ and $v_kv'_l$ of $M(G)$. It is clear that $G_i$ has either $v'_kv_l$ or $v_kv'_l$. Since $\{ G_1, \ldots, G_t\}$ is a $d$-biclique cover of $M(G)$, there exist at least $i_1, \ldots, i_d$ and $i'_1, \ldots, i'_d$ such that $G_{i_j}$ $(j=1, \ldots, d)$ has $v'_kv_l$ and  $G_{i'_j}$ $(j=1, \ldots, d)$ has $v_kv'_l$. Easily, one can see that $H_{i_j}$ and $H_{i'_j}$, for $j=1, \ldots, d$, have edge $v_kv_l$. So we have a $2d$-biclique cover of size $ bc_d(M(G))$ for $G$. We proceed to show the upper bound in the first statement. We will give a $d$-biclique cover for $M(G)$, using a $d$-biclique cover of $G$. Assume that $ \{G_1, \ldots, G_t\}$  is an optimal $d$-biclique cover of $G$. Suppose that $G_i$ has $\{X_i,Y_i\}$ as a bipartition of its vertex set. Using the notation of Mycielski graphs, let $G'_i$ (resp. $G''_i$) be a complete bipartite graph where has  $\{X'_i\cup X_i,Y_i\}$ (resp. $\{X_i,Y'_i \cup Y_i\}$) as a bipartition of its vertices, where $X'_i$ is the set of twins of $X_i$. Let $H$ be  the closed neighborhood of $u$ in $M(G)$, which is a star with $u$ as its center. It is not difficult to see that $ \{ G'_1, \ldots, G'_t \} \biguplus \{ G''_1, \ldots, G''_t \}$ with $d$ copies of $H$ is a $d$-biclique cover for $M(G)$. Hence $bc_d(M(G)) \leq 2bc_d(G)+d$. Our next goal is to eliminate stars from the aforementioned cover. So assume that we have a good $d$-biclique cover. Without loss of generality assume that $X_1 \biguplus \ldots \biguplus X_s$ is the set that has every vertex of $G$ at least $d$ times. Let $G'_i$ be a complete bipartite graph which has  $\{X'_i,Y_i \cup \{u \}\}$ as a bipartition of its vertices and let $G''_i$ be a complete bipartite graph where  $\{X_i,Y'_i \cup Y_i\}$ is  a bipartition of its vertices. Easily one can see that $ \{ G'_1, \ldots, G'_t \} \biguplus \{ G''_1, \ldots, G''_t \}$ is a $d$-biclique cover for $M(G)$.

}
\end{proof}
\begin{cor}Let $G$ be a $C_4$-free graph whose $\beta(G)= \alpha'(G)$. If $G$ has a minimal vertex cover such that the induced graph on this set of vertices has no isolated vertex then $$bc_d(M(G)) = 2d\beta(G). $$
\end{cor}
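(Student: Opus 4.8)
The plan is to sandwich $bc_d(M(G))$ between $2d\beta(G)$ from both sides. For the lower bound I would simply invoke Theorem~\ref{mis}, which gives $bc_{2d}(G)\le bc_d(M(G))$, together with Theorem~\ref{equal} applied to the parameter $2d$; since $G$ is $C_4$-free with $\beta(G)=\alpha'(G)$, the latter yields $bc_{2d}(G)=2d\beta(G)$, and hence $bc_d(M(G))\ge 2d\beta(G)$. The entire difficulty therefore lies in the matching upper bound $bc_d(M(G))\le 2d\beta(G)$. Here the second statement of Theorem~\ref{mis} is the right tool: if $G$ admits a \emph{good} $d$-biclique cover then $bc_d(M(G))\le 2\,bc_d(G)$, and by Theorem~\ref{equal} we have $bc_d(G)=d\beta(G)$, so $2\,bc_d(G)=2d\beta(G)$ as required. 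Thus the whole corollary reduces to the single assertion: \emph{$G$ possesses a good optimal $d$-biclique cover.}

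To build such a cover I would first note that, because $G$ is $C_4$-free, every biclique of $G$ is a star (a copy of $K_{2,2}=C_4$ is forbidden), so a star construction is the natural one. Let $C$ be a minimum vertex cover of $G$ whose induced subgraph $G[C]$ has no isolated vertex; this is what the hypothesis supplies (the word ``minimal'' must be read as ``minimum,'' since optimality of the cover below forces $|C|=\beta(G)$). For each $c\in C$ let $S_c$ be the star consisting of $c$ together with all edges of $G$ incident to $c$. As $C$ is a vertex cover, every edge lies in some $S_c$, so $\{S_c : c\in C\}$ is a biclique cover of $G$, and taking $d$ copies of each $S_c$ produces a $d$-biclique cover of size $d|C|=d\beta(G)$. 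By Theorem~\ref{equal} this equals $bc_d(G)$, so the cover is optimal.

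It remains to verify that this optimal cover is good, and this is exactly the step where the hypothesis on $C$ is used. I would orient every star $S_c$ so that its designated left part $X$ is the leaf side $N(c)$, the right part being the centre $\{c\}$. Then in the multiset $\biguplus X$, a vertex $w$ is counted once for each pair $(c,\text{copy})$ with $w\in N(c)$, i.e. exactly $d\,|N(w)\cap C|$ times. For $w\notin C$ every neighbour of $w$ lies in $C$ (as $C$ is a vertex cover), so $|N(w)\cap C|=|N(w)|\ge 1$; for $w\in C$ the no-isolated-vertex condition on $G[C]$ guarantees $N(w)\cap C\neq\emptyset$, again giving $|N(w)\cap C|\ge 1$. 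In either case $w$ appears at least $d$ times, so the chosen left parts cover every vertex at least $d$ times and the cover is good.

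The main obstacle is precisely this last verification: one must choose the bipartitions of the stars so that their left parts \emph{already} cover all vertices the requisite number of times, and the only vertices for which this is not automatic are the cover vertices themselves. This is why the hypothesis that $G[C]$ has no isolated vertex cannot be dispensed with: a cover vertex all of whose neighbours are leaves could never be reached by any leaf side, and the argument would break. Throughout one assumes $G$ has no isolated vertices (such vertices contribute nothing to $\beta(G)$, while their twins in $M(G)$ would otherwise force additional root bicliques); granting this, the construction yields a good optimal $d$-biclique cover, and combining the two bounds gives $bc_d(M(G))=2d\beta(G)$, as claimed.
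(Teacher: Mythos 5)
Your proposal is correct and takes essentially the same route as the paper: both arguments build a good $d$-biclique cover of $G$ out of $d$ copies of the stars centred at the vertices of a minimum vertex cover whose induced subgraph has no isolated vertex, and then sandwich $bc_d(M(G))$ using the two inequalities of Theorem~\ref{mis} together with Theorem~\ref{equal}. You simply make explicit what the paper compresses into ``it follows easily''---the choice of the leaf sides as the $X_i$'s, the counting $d\,|N(w)\cap C|\geq d$, and the implicit assumption that $G$ itself has no isolated vertices (which is indeed needed, and which you were right to flag).
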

\begin{proof}{ Suppose that  $G$ has a vertex cover with the aforementioned
property. It follows easily that $G$ has a good $1$-biclique cover. Consider this $1$-biclique cover $d$ times, then we have a good $d$-biclique cover. Therefore, the corollary follows directly from Theorem \ref{mis}.
}
\end{proof}
If $d$ is an even number then we can construct a good $d$-biclique cover for $C_n$. So by Theorem \ref{mis} we have the following corollary.

\begin{cor} Let $n>4$.  Then for every positive integer $d$,
$$bc_{2d}(M(C_n))=2dn .$$
\end{cor}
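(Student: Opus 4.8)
The plan is to sandwich $bc_{2d}(M(C_n))$ between two applications of Theorem~\ref{mis}, both of which will turn out to equal $2dn$. Since the statement concerns the index $2d$, I would apply Theorem~\ref{mis} to the graph $G = C_n$ with the theorem's parameter taken to be $2d$ rather than $d$. The hypothesis for the sharper (star-free) version of Theorem~\ref{mis} requires a good $2d$-biclique cover of $C_n$; this is available because $2d$ is even, and the remark preceding the corollary supplies a good $d$-biclique cover of $C_n$ whenever $d$ is even. Consequently Theorem~\ref{mis} gives
$$bc_{4d}(C_n) \leq bc_{2d}(M(C_n)) \leq 2\,bc_{2d}(C_n).$$

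Next I would evaluate the two outer quantities using the formula for $bc_d(C_n)$. For the upper bound, I note that for $n>4$ one has $bc_{2d}(C_n)=dn$: when $n$ is even (so $n\geq 6$) this is $\frac{n}{2}\cdot 2d$, and when $n$ is odd the index $2d$ is even, so the relevant branch gives $\frac{2d}{2}\,n=dn$. Hence $bc_{2d}(M(C_n)) \leq 2dn$. For the lower bound, the same formula yields $bc_{4d}(C_n)=2dn$ by the identical case analysis (even $n$: $\frac{n}{2}\cdot 4d$; odd $n$ with the even index $4d$: $\frac{4d}{2}\,n$). Thus $2dn \leq bc_{2d}(M(C_n))$, and combining the two bounds forces equality, $bc_{2d}(M(C_n))=2dn$.

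The only place where care is needed---and the step I would flag as the crux---is the bookkeeping of indices: the corollary is about $M(C_n)$ at level $2d$, so feeding $2d$ into Theorem~\ref{mis} produces a lower bound at level $4d$ and an upper bound at level $2d$, two different covering numbers. The reason the sandwich closes is that the cycle formula is linear in an even index, so $bc_{4d}(C_n)=2\,bc_{2d}(C_n)$; this exact doubling is what makes the lower and upper bounds coincide at $2dn$. I would also remark that the restriction $n>4$ is precisely what excludes the exceptional value $bc_d(C_4)=d$ from the cycle formula and keeps every even cycle in the $n\geq 6$ regime; indeed for $C_4$ the same sandwich would collapse to $4d$ rather than $2dn=8d$, so the hypothesis $n>4$ is genuinely needed.
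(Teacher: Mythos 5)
Your proof is correct and takes essentially the same route as the paper: the paper's own justification is exactly your argument, namely that the even index $2d$ supplies a good $2d$-biclique cover of $C_n$, so the sharpened form of Theorem~\ref{mis} gives $bc_{4d}(C_n)\leq bc_{2d}(M(C_n))\leq 2bc_{2d}(C_n)$, and the cycle formula evaluates both ends to $2dn$. (One small caveat on your closing remark only: for $n=4$ the sandwich does not merely close at the wrong value $4d$---it does not apply at all, since an optimal $2d$-cover of $C_4$ by copies of $K_{2,2}$ has total multiset size of the $X_i$'s at most $4d$, too small for every vertex to appear $2d$ times, so $C_4$ has no good $2d$-biclique cover and only the weaker bound with the $+d$ term is available; this does not affect your proof of the corollary itself.)
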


{\bf Acknowledgment}\\ We would like to acknowledge Professor
Hossein Hajiabolhassan  for his helpful comments. Also, the authors wish to thank the anonymous referee who drew their attention to Theorems~\ref{equal} and \ref{wats} and for helpful comments.





\end{document}